\let\reftagform@=\tagform@
\def\tagform@#1{\maketag@@@{(\ignorespaces\textcolor{blue}{#1}\unskip\@@italiccorr)}}
\renewcommand{\eqref}[1]{\textup{\reftagform@{\ref{#1}}}}
\newtheorem{theorem}{Theorem}
\theoremstyle{plain}
\newtheorem{conjecture}{Conjecture}
\newtheorem{corollary}{Corollary}
\newtheorem{definition}{Definition}
\newtheorem{remark}{Remark}
\numberwithin{equation}{section}
\begin{document}
\title[A note on $h$-convex functions]{A note on $h$-convex functions}
\author[M.W. Alomari]{Mohammad W. Alomari}

\address{Department of Mathematics, Faculty of Science and
Information Technology, Irbid National University, 2600 Irbid
21110, Jordan.} \email{mwomath@gmail.com}
\date{\today}
\subjclass[2000]{26A15, 26A16, 26A51}

\keywords{$h$-Convex function, H\"{o}lder continuous}

\begin{abstract}
In this work, we discuss the continuity of $h$-convex functions by introducing the concepts of $h$-convex curves ($h$-cord). Geometric interpretation of $h$-convexity is given. The fact that for a $h$-continuous function $f$, is being $h$-convex if and only if is $h$-midconvex is proved. Generally, we prove that if $f$ is $h$-convex then $f$ is $h$-continuous. A discussion regarding derivative characterization of $h$-convexity is also proposed.
\end{abstract}

\maketitle
%/=\=/=\=/=\=/=\=/=\=/=\=/=\=/=\=/=\=/=\=/=\=/=\=/=\=/=\=/=\=/=\=/=\=/=\=/=\=/=\=/=\=/=\=/=\=/=\=/=\=/=\=/=\=
%/=\=/=\=/=\=/=\=/=\=/=\=/=\=/=\=/=\=/=\=/=\=/=\=/=\=/=\=/=\=/=\=/=\=/=\=/=\=/=\=/=\=/=\=/=\=/=\=/=\=/=\=/=\=
\section{Introduction}
%/=\=/=\=/=\=/=\=/=\=/=\=/=\=/=\=/=\=/=\=/=\=/=\=/=\=/=\=/=\=/=\=/=\=/=\=/=\=/=\=/=\=/=\=/=\=/=\=/=\=/=\=/=\=
%/=\=/=\=/=\=/=\=/=\=/=\=/=\=/=\=/=\=/=\=/=\=/=\=/=\=/=\=/=\=/=\=/=\=/=\=/=\=/=\=/=\=/=\=/=\=/=\=/=\=/=\=/=\=

Let $I$ be a real interval. A function $f:I\to \mathbb{R}$ is
called convex iff
\begin{align}
f\left( {t\alpha +\left(1-t\right)\beta} \right)\le tf\left(
{\alpha} \right)+ \left(1-t\right) f\left( {\beta}
\right),\label{eq1.1}
\end{align}
for all points $\alpha,\beta \in I$ and all $t\in [0,1]$. If $-f$
is convex then we say that $f$ is concave. Moreover, if $f$ is
both convex and concave, then $f$ is said to be affine.

In 1979, Breckner \cite{B1} introduced the class of $s$-convex
functions (in the second sense), as follows:
\begin{definition}
\label{def1}  Let $I \subseteq \left[0,\infty\right)$ and $s\in
(0,1]$, a function $f : I\to \left[0,\infty\right)$ is
 $s$-convex function or that $f$ belongs to the
class $K^2_s\left(I\right)$ if for all $x,y \in I$ and $t?\in
[0,1]$ we have
\begin{align*}
f \left(tx+\left(1-t\right)y\right) \le
t^sf\left(x\right)+\left(1-t\right)^sf\left(y\right).
\end{align*}
\end{definition}
In the last years, among others, the notion of $s$-convex
functions is discriminated and starred. In literature a few papers
devoted to study this type of convexity. The building theories of
$s$-convexity as geometric and analytic tools are still under
consideration, development and examine. Due to Hudzik and
Maligranda (1994)\nocite{HM}, two senses of $s$-convexity $(0 < s
\le 1)$ of real-valued functions are known in the literature, and
given below:
\begin{definition}
\label{def2.3.7}A function $f: \mathbb{R}_+ \to \mathbb{R} $,
where $\mathbb{R}_+ = \left[{0,\infty} \right)$, is said to be
$s$-convex in the first sense if
\begin{eqnarray}
\label{eq2.3.1}f\left( {\alpha x + \beta y}\right) \le \alpha ^s
f\left( x \right) + {\beta}^s f\left( y \right)
\end{eqnarray}
for all $x, y \in \left[ {0,\infty } \right)$, $\alpha, \beta \ge
0$ with $\alpha^s + \beta^s = 1$ and for some fixed $s \in \left(
{0,1 } \right]$. This class of functions is denoted by $K_s^1$.
\end{definition}
\noindent This definition of $s$-convexity, for so called
$\varphi$-functions, was introduced by Orlicz in 1961 and was used
in the theory of Orlicz spaces. A function $f: \mathbb{R}_+ \to
\mathbb{R}_+$ is said to be a $\varphi$-function if $f(0) = 0$ and
$f$ is nondecreasing and continuous. The symbol $\varphi$ stands
for an Orlicz function, i.e., $\varphi$ is defined on the real
line $\mathbb{R}$ with values in $[0, +\infty]$ and is convex,
even, vanishing and continuous at zero. For further details see
\cite{HM}, \cite{J} ,\cite{MO},\cite{R}.

In fact, Breckner \cite{B1} walked in the footsteps of Orlicz's
definition (Definition \ref{def2.3.7}) and introduced another type
of $s$-convexity or what so called \emph{Breckner $s$-convex}, as
follows:
\begin{definition}
\label{Breckner}A function $f: \mathbb{R}^+ \to \mathbb{R}$, where
$\mathbb{R}^+ = \left[{0,\infty} \right)$, is said to be
$s$--convex in the second sense if
\begin{eqnarray}
\label{Breckner1}f\left( {\alpha x + \beta y}\right) \le \alpha ^s
f\left( x \right) + {\beta}^s f\left( y \right)
\end{eqnarray}
for all $x, y \in \left[ {0,\infty } \right)$, $\alpha, \beta \ge
0$ with $\alpha + \beta = 1$ and for some fixed $s \in \left( {0,1
} \right]$. This class of functions is denoted by $K_s^2$.
\end{definition}

\begin{remark}
We note that, it can be easily seen that for $s=1$, $s$-convexity
(in both senses) reduces to the ordinary convexity of functions
defined on $\left[ {0,\infty }\right) $.
\end{remark}

In general, a real-valued function $f$ defined on an open convex
subset $C$ of a linear space is called Breckner $s$-convex if
\eqref{Breckner1} holds for every $x,y \in C$, $\alpha,\beta  \in
[0,1]$ with $\alpha+\beta = 1$, where $s \in (0,1)$ is fixed. More
preciously,  Breckner considered an open convex subset
$\mathbb{M}$ of a linear space $\mathbb{L}$ and defined
$f:\mathbb{M} \subseteq \mathbb{L}\to \mathbb{R}$, to be
$s$-convex if \eqref{Breckner1} holds, for all $x,y \in
\mathbb{M}$, $\alpha,\beta \in [0,1]$ with $\alpha+\beta = 1$,
where $s \in (0,1)$ is fixed. Also, Breckner considered a special
case of $s$-convex functions which is so called rational
$s$-convex, that is for all rational $\alpha,\beta \in [0,1]$ with
$\alpha+\beta = 1$ and points $x,y \in \mathbb{M}$, the inequality
\eqref{Breckner1} holds. Furthermore, Breckner proved that for
locally bounded above $s$-convex functions defined on open subsets
of linear topological spaces are continuous and nonnegative.

In 1978,   Breckner and Orb\'{a}n \cite{B2} considered functions
from a convex subset of a real or complex Hausdorff topological
linear space of dimension greater than 1 into an ordered
topological linear space such that all its order-bounded subsets
are bounded, and showed that Breckner $s$-convex functions with $s
\in (0, 1]$ are continuous on the interior of their domain.

In 1994, Breckner \cite{B3}  (see also \cite{B4}) proved that for
a rationally $s$-convex function continuity and local
$s$-H\"{o}lder continuity are equivalent at each interior point of
the domain of definition of the function. Furthermore, it is shown
that a rationally $s$-convex function which is bounded on a
nonempty open convex set is $s$-H\"{o}lder continuous on every
compact subset of this set. Indeed, Breckner [2], showed that if a
real-valued function defined on a convex subset of a linear space
endowed with topology generated by a direct pseudonorm is
continuous and rationally Breckner $s$-convex for an $s \in (0,
1]$, then it is locally $s$-H\"{o}lder.

In 1994, Hudzik and Maligranda \cite{HM}, realized the importance
and undertook a systematic study of $s$-convex functions in both
sense. They compared the notion of Breckner $s$-convexity with a
similar one of \cite{MO}. A function $f$ is Orlicz $s$-convex if
the inequality (\ref{eq2.3.1}) is satisfied for all $\alpha,
\beta$ such that $\alpha^s + \beta^s = 1$. Hudzik and Maligranda,
among others, gave an example of a non-continuous Orlicz
$s$-convex function, which is not Breckner $s$-convex.\\

In 2001, Pycia \cite{P} established a direct proof of Breckner's
result that Breckner $s$-convex real-valued functions on finite
dimensional normed spaces are locally $s$-H\"{o}lder. The same  result was proved in \cite{mwo1} where  different context was considered. For the same result regarding convexity see \cite{C1} and \cite{C2}.\\

In the  2008, Pinheiro \cite{MRP1} studied the class of $K_s^1$ of
$s$-convex functions and explained why the first $s$-convexity
sense was abandoned by the literature in the field. In fact,
Pinheiro \nocite{MRP1}, proposed some criticisms to the current
way of presenting the definition of $s$-convex functions. We may
summarize Pinheiro criticisms in the following points:
\begin{enumerate}
    \item What is the `true' difference between convex and $s$-convex
    in both senses.

    \item So far, Pinheiro did not find references, in the literature,
    to the geometry of an $s$-convex function, what, once more, makes
    it less clear to understand the difference between an $s$-convex
    and a convex function whilst there are clear references to the
    geometry of the convex functions.
\end{enumerate}
In the same paper \cite{MRP1}, Pinheiro revised the class of
$s$--convexity in the first sense. In \cite{MRP2}, Pinheiro
proposed a geometric interpretation for this type of functions.
\begin{definition}
    \label{Pinheiro}Let $U$ be any subset of $[0,\infty)$. A function
    $f: X \to \mathbb{R}$, is said to be $s$--convex in the first
    sense if
    \begin{eqnarray}
    \label{Pinheiro1}f\left( {\lambda x + \left( {1-\lambda^s}
        \right)^{1/s} y}\right) \le \lambda^s f\left( x \right) + \left(
    {1-\lambda^s} \right) f\left( y \right)
    \end{eqnarray}
    for all $x, y \in U$ and $\lambda \in [0,1]$.
\end{definition}
The presented reason from Pinheiro to why $s$-convexity in the
first sense got abandoned in the literature, is that, if one takes
$x = y = \frac{1}{4}$ with $\alpha = \frac{1}{2}$ and $\beta = 1$
for example, one gets that $\alpha x + \beta y = 0.125 + 0.25 =
0.375$. So that, if $s = \frac{1}{2}$, then the value of $\alpha x
+ \beta y$ would lie outside of the interval $[x,y]$, on the
contrary of this, the value of $\alpha x + \beta y$ would lie
inside of the interval $[x,y]$ in case of convexity. With this the
first sense of $s$-convexity becomes a close to the meaning of
convexity and so the geometric explanation of $s$-convex function
is easy to be compared with the geometry of convex function if
some further restrictions are imposed to it.

\noindent The proposed geometric description for $s$-convex curve
in the first sense stated by  Pinheiro \cite{MRP1}--\cite{MRP6} as
follows:

\begin{definition}
    \label{def2.3.22} A function $f: X \subset \mathbb{R}_+ \to
    \mathbb{R}$  is called $s$-convex in the first sense if and only
    if one in two situations occur:

    \begin{itemize}
        \item $0 < s_1 < 1$, $f$ then belonging to $K^1_s$ , for $0 < s
        \le s_1$: The graph of f lies below (L), which is a convex curve
        between any two domain points with minimum distance of $(2^{-1} -
        2^{-1/s})$ (domain points distance), that is, for every compact
        interval $J \subset I$, where length of J is greater than, or
        equal to $(2^{-1} - 2^{-1/s})$ interval with boundary $\partial
        J$, it is true that
        $$\sup _J \left( {L - f} \right) \ge \sup _{\partial J} \left( {L -
            f} \right)$$ and $L$ is such that it is continuous, smooth, and,
        for each point $x$ of $L$, defined in terms of ninety degrees
        intercepts with the straight line between the two points of the
        function, it is true that $1 \le x \le 2^{ - 1}  + 2^{ -s}$, where
        1 corresponds to the straight line height;

        \item $f$ is convex.
    \end{itemize}
\end{definition}

In general, the class of $s$-convex functions in the second sense
would incomplete concept without a geometric interpretations for
it is behavior. Recently, Pinheiro devoted her efforts to give a
clear geometric definition for $s$-convexity in second sense. In
\cite{MRP3}  Pinheiro successfully proposed a geometric
description for $s$-convex curve, as follows:
\begin{definition}
    \label{def2.3.31}$f$ is called $s$-convex in the second sense if
    and only if one in two situations occur:

    \begin{itemize}
        \item $0 < s_1 < 1$, $f$ then belonging to $K^2_s$ , for $0 < s
        \le s_1$: The graph of f lies below (L), which is a convex curve
        between any two domain points with minimum distance of $(2^{-s} -
        2^{-1} )$ (domain points distance), that is, for every compact
        interval $J \subset I$, where length of J is greater than, or
        equal to $(2^{-s} - 2^{-1})$ interval with boundary $\partial J$,
        it is true that
        $$\sup _J \left( {L - f} \right) \ge \sup _{\partial J} \left( {L -
            f} \right)$$ and $L$ is such that it is continuous, smooth, and,
        for each point $x$ of $L$, defined in terms of ninety degrees
        intercepts with the straight line between the two points of the
        function, it is true that $1 \le x \le 2^{1 - s}$, where 1
        corresponds to the straight line height;

        \item $f$ is convex.
    \end{itemize}
\end{definition}
\noindent More geometrically, an interpretation of $s$-convex
functions is introduced as follows:
\begin{definition}
    \label{def2.3.32}$f$ is called $s$--convex, $0 < s < 1$, $f \ge
    0$, if the graph of $f$ lies below a `bent chord' $L$ between any
    two points. That is, for every compact interval $J \subset I$,
    with boundary $\partial J$, it is true that $$\mathop {\sup
    }\limits_J \left( {L - f} \right) \ge \mathop {\sup
    }\limits_{\partial J} \left( {L - f} \right).
    $$
\end{definition}

\noindent Indeed the geometric view for $s$-convex mapping of
second sense is going through which Pinheiro called it
`\emph{limiting curve}', which is going to distinguish curves that
are $s$-convex of second sense from those that are not. After
that, Pinheiro obtained how the choice of `$s$' affects the
limiting curve. In general a `limiting curve' may be described by
a \emph{bent chord} joining $f(x)$ to $f(y)$-corresponding to the
verification of the $s$-convexity property of the function $f$ in
the interval $[x, y]$-forms representing the limiting height for
the curve $f$ to be at, limit included, in case $f$ is $s$-convex.
This curve is represented by $\lambda ^s f\left( x \right) +
\left( {1 - \lambda } \right)^s f\left( y \right)$, for each
$0<s<1$.

\noindent Some properties of the limiting curve such as: maximum
height, length, and local inclination are considered in
\cite{MRP2}--\cite{MRP5}.

\begin{itemize}
    \item \textbf{Height.} The maximum of the limiting $s$-curve is
    $2^{1-s}$.

    \item \textbf{Length.}  Let $ f\left( \lambda \right) = \lambda ^s
    X + \left( {1 - \lambda } \right)^s Y $, with $X = f\left( x
    \right)$, and $Y = f\left( y \right)$. The size of the limiting
    curve from $f\left( x \right)$ to $f\left( y \right)$ is
    \begin{align*}
    L\left( \lambda  \right) &= \int_0^1 {\sqrt {1 + s^2 \lambda ^{2s
                - 2}  + s^2 \left( {1 - \lambda } \right)^{2s - 2}  - 2s^2 \lambda
            ^{s - 1} \left( {1 - \lambda } \right)^{s - 1} } d\lambda }
    \end{align*}
    which shows that how bent is the limiting curve.

    \item \textbf{Local inclination.} The local inclination of the
    limiting curve may be founded by means of the first derivative,
    consider $f\left( \lambda  \right) = \lambda ^s f\left( x \right)
    + \left( {1 - \lambda } \right)^s f\left( y \right)$, Therefore,
    the inclination is $f'\left( \lambda  \right) = s\lambda ^{s - 1}
    f\left( x \right) - s\left( {1 - \lambda } \right)^{s - 1} f\left(
    y \right)$ and varies accordingly to the value of $\lambda$.
\end{itemize}

In 1985, E. K. Godnova and V. I. Levin (see \cite{GL} or
\cite{MPF}, pp. 410-433) introduced the following class of
functions:
\begin{definition}
\label{def2}   We say that $f : I \to \mathbb{R}$ is a
Godunova-Levin function or that $f$ belongs to the class $Q\left(I
\right)$ if for all $x,y \in I$ and $t?\in (0,1)$ we have
\begin{align*}
f \left(tx+\left(1-t\right)y\right) \le
\frac{f\left(x\right)}{t}+\frac{f\left(y\right)}{1-t}.
\end{align*}
\end{definition}
In the same work, the authors proved that  all nonnegative
monotonic and nonnegative convex functions belong to this class.
For related works see \cite{DPP} and \cite{MP}.

In 1999, Pearce and Rubinov \cite{PR}, established a new type of
convex functions which is called $P$-functions.
\begin{definition}
\label{def3}  We say that  $f : I\to \mathbb{R}$ is $
 P $-function or that $f$ belongs to the class
$P\left(I \right)$ if for all $x,y \in I$ and $t?\in [0,1]$ we
have
\begin{align*}
f \left(tx+\left(1-t\right)y\right) \le
 f\left(x\right)+ f\left(y\right).
\end{align*}
\end{definition}
Indeed, $Q(I) \supseteq P (I)$ and for applications it is
important to note that $P (I)$ also consists only of nonnegative
monotonic, convex and quasi-convex functions.  A related work was
considered in \cite{DPP} and \cite{TYD}.

In 2007, Varo\v{s}anec \cite{V} introduced the class of $h$-convex
functions which generalize convex, $s$-convex, Godunova-Levin
functions and $P$-functions. Namely, the $h$-convex function is
defined as a non-negative function $f : I \to \mathbb{R}$ which
satisfies
\begin{align}
f\left( {t\alpha +\left(1-t\right)\beta} \right)\le
h\left(t\right) f\left( {\alpha} \right)+ h\left(1-t\right)
f\left( {\beta} \right),\label{h-convex}
\end{align}
where $h$ is a non-negative function, $t\in (0, 1)\subseteq J$
and $x,y \in I $, where $I$ and $J$ are real intervals such that
$(0,1) \subseteq J $. Accordingly, some properties of $h$-convex
functions were discussed in the same work of Varo\v{s}anec. For
more results; generalization, counterparts and inequalities
regarding $h$-convexity see  \cite{BV},
\cite{C}--\cite{D3},\cite{H},\cite{M}, and \cite{O}.

%/=\=/=\=/=\=/=\=/=\=/=\=/=\=/=\=/=\=/=\=/=\=/=\=/=\=/=\=/=\=/=\=/=\=/=\=/=\=/=\=/=\=/=\=/=\=/=\=/=\=/=\=/=\=
%/=\=/=\=/=\=/=\=/=\=/=\=/=\=/=\=/=\=/=\=/=\=/=\=/=\=/=\=/=\=/=\=/=\=/=\=/=\=/=\=/=\=/=\=/=\=/=\=/=\=/=\=/=\=
\section{On $h$--convex functions}
%/=\=/=\=/=\=/=\=/=\=/=\=/=\=/=\=/=\=/=\=/=\=/=\=/=\=/=\=/=\=/=\=/=\=/=\=/=\=/=\=/=\=/=\=/=\=/=\=/=\=/=\=/=\=
%/=\=/=\=/=\=/=\=/=\=/=\=/=\=/=\=/=\=/=\=/=\=/=\=/=\=/=\=/=\=/=\=/=\=/=\=/=\=/=\=/=\=/=\=/=\=/=\=/=\=/=\=/=\=

Throughout this work, $I$ and $J$ are two intervals subset of
$\left(0,\infty\right)$ such that $\left(0,1\right)\subseteq J$
and $\left[a,b\right]\subseteq I$ with $0<a<b$.
\begin{definition}
The $h$-cord joining any two points
$\left(x,f\left(x\right)\right)$ and
$\left(y,f\left(y\right)\right)$ on the graph of $f$ is defined to
be
\begin{align}
L\left(t;h\right):= \left[ {f\left( y \right) - f\left( x \right)}
\right]h\left(\frac{t-x}{y-x}\right) +  f\left( x
\right),\label{eq2.1}
\end{align}
for all $t\in [x,y]\subseteq \mathcal{I}$.  In particular, if
$h(t)=t$ then we obtain the well known form of chord, which is
\begin{align*}
L\left(t;t\right):=\frac{f\left( y \right) - f\left( x
\right)}{y-x} \left( {t-x} \right)+ f\left( x \right).
\end{align*}
\end{definition}
It's worth to mention that, if $h\left(0\right)=0$ and
$h\left(1\right)=1$, then $L\left(x;h\right)=  f\left( x \right)$
and $L\left(y;h\right)= f\left( y \right)$, so that the $h$-cord
$L$ agrees with $f$ at endpoints $x,y$, and this true for all such
$x,y \in I$.

The $h$-convexity of a function $f : I \to \mathbb{R}$ means
geometrically that the points of the graph of $f$ are on or below
 the $h$-chord joining the endpoints
$\left(x,f\left(x\right)\right)$ and
$\left(y,f\left(y\right)\right)$ for all $x,y?\in I$, $x < y$. In
symbols, we write
\begin{align*}
f\left(t\right)\le  \left[ {f\left( y \right) - f\left( x \right)}
\right]h\left(\frac{t-x}{y-x}\right) + f\left( x
\right)=L\left(t;h\right),
\end{align*}
for any $x \le t \le y$ and $x,y\in I$.
\begin{figure}[!h]
\begin{center}
\includegraphics[angle=0,width=2.6in]{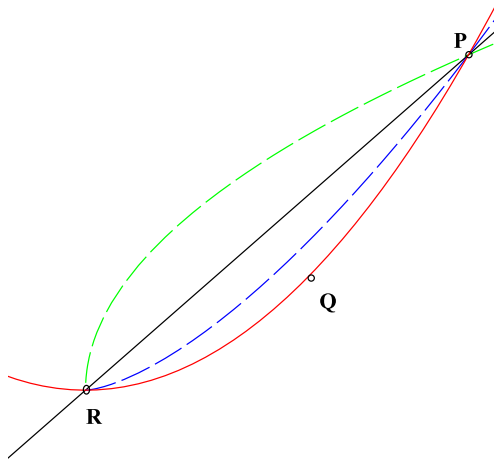}
\end{center}
\caption{The graph of $h_k(t)=t^k$, $k=\frac{1}{2}, 1 ,
\frac{3}{2}$ (green, black, blue), respectively, and $f(t)=t^2$
(red), $t\in [0,1]$.} \label{fig1}
\end{figure}

Hence, \eqref{h-convex} means geometrically that for a given three
non-collinear points $P,Q$ and $R$ on the graph of $f$ with $Q$
between $P$ and $R$ (say $P<Q<R$). Let $h$ is
super(sub)multiplicative and $h\left(\alpha\right)\ge
 (\le)\,\alpha$, for $\alpha \in \left(0,1\right)\subset J$. A
function $f$ is $h$--convex (concave) if $Q$ is on or below
(above) the $h$-chord $\widehat{PR}$ (see Figure \ref{fig1}).\\

\noindent \textbf{Caution:} In special case, for
$h_k\left(t\right) = t^k$, $t\in \left(0,1\right)$ the proposed
geometric interpretation is valid for $k \in (-1,0)\cup
(0,\infty)$. In the case that $k \le -1$ or $k=0$ the geometric
meaning is inconclusive so we exclude this case (and (and similar
cases) from our proposal above.

\begin{definition}
  Let $h: J\to \left(0,\infty\right)$ be a non-negative
function. Let $f : I \to \mathbb{R}$ be any function. We say $f$
is $h$-midconvex ($h$-midconcave) if
\begin{align*}
f\left( {\frac{{x + y}}{2}} \right) \le (\ge)\,h\left(
{\frac{1}{2}} \right)\left[ {f\left( x \right) + f\left( y
\right)} \right]
\end{align*}
 for  all $x,y \in I$.
\end{definition}
In particular, $f$ is locally $h$-midocnvex if and only if
\begin{align*}
h\left( {\frac{1}{2}} \right)\left[f\left( {x + p} \right) +
f\left( {x - p} \right)\right] - f\left( x \right) \ge 0,
\end{align*}
for  all $x\in \left(x-p,x+p\right)$, $p>0$.

A generalization of Jensen characterization of convex functions
could be stated as follows:
\begin{theorem}
Let $h: J\to \left(0,\infty\right)$ be a non-negative function
such that $h\left(\alpha\right) \ge \alpha$, for all $\alpha \in
(0,1)$. Let $f : I \to \mathbb{R}_+$ be a   nonnegative continuous
function. $f$ is $h$-convex if and only if it is $h$-midconvex;
i.e., the inequality
\begin{align*}
f\left( {\frac{{x + y}}{2}} \right) \le  h\left( {\frac{1}{2}}
\right)\left[ {f\left( x \right) + f\left( y \right)} \right],
\end{align*}
holds for  all $x,y \in I$.
\end{theorem}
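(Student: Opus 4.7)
My plan is to mimic Jensen's classical proof that continuity plus midconvexity forces full convexity, adapted to the $h$-framework. The \emph{necessity} direction is trivial: setting $t=1/2$ in the defining inequality \eqref{h-convex} and using $h(1/2)=h(1-1/2)$ immediately yields the $h$-midconvex inequality.

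For \emph{sufficiency}, I would first prove by induction on $n\ge 1$ that for every dyadic rational $k/2^n\in[0,1]$ and all $x,y\in I$,
\[
f\!\left(\tfrac{k}{2^n} x + \tfrac{2^n-k}{2^n} y\right) \le h\!\left(\tfrac{k}{2^n}\right) f(x) + h\!\left(\tfrac{2^n-k}{2^n}\right) f(y).
\]
The base $n=1,k=1$ is exactly the $h$-midconvex hypothesis. For the inductive passage from level $n$ to level $n+1$, the only new points are those with odd numerator $k=2m+1$; write such a point as the midpoint of $\tfrac{m}{2^n}x+\tfrac{2^n-m}{2^n}y$ and $\tfrac{m+1}{2^n}x+\tfrac{2^n-m-1}{2^n}y$, apply $h$-midconvexity at the outer layer, and then the induction hypothesis to the two dyadic-$n$ points inside. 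This produces a linear bound on $f$ in terms of $f(x)$ and $f(y)$.

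The main obstacle is that the coefficients coming out of this step---of the form $h(1/2)\bigl[h(m/2^n)+h((m+1)/2^n)\bigr]$ on $f(x)$---do not automatically dominate the target $h((2m+1)/2^{n+1})$ without some submultiplicativity-type property on $h$, which the theorem does not explicitly assume. The leverage is the hypothesis $h(\alpha)\ge\alpha$ together with $f\ge 0$: since the target right-hand side $h(t)f(x)+h(1-t)f(y)$ dominates $tf(x)+(1-t)f(y)$, the inductive step really only needs to yield an ordinary linear (Jensen-type) bound, which can then be inflated into the $h$-form via the pointwise inequality $h(\alpha)\ge\alpha$. Once the dyadic case is in hand, density of dyadic rationals in $[0,1]$ together with continuity of $f$ extends the inequality to all $t\in[0,1]$: pick dyadic $t_n\to t$, pass $f(t_n x+(1-t_n)y)\to f(tx+(1-t)y)$ by continuity of $f$, and convert the right-hand-side limits into the desired $h(t)f(x)+h(1-t)f(y)$ using $h(\alpha)\ge\alpha$ and $f\ge 0$. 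The coefficient-control in the induction is by far the delicate part; the density-and-continuity closure is routine.
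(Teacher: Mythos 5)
Your necessity direction is fine, but the sufficiency direction has a genuine gap at exactly the point you yourself flag as ``delicate,'' and the repair you propose does not work. Your fallback is: instead of matching the $h$-coefficients in the dyadic induction, first establish the ordinary Jensen bound $f(tx+(1-t)y)\le t f(x)+(1-t)f(y)$ for dyadic $t$, and then inflate it to the $h$-form using $h(\alpha)\ge\alpha$ and $f\ge 0$. The inflation step is sound, but the ordinary dyadic induction is not available to you: it requires genuine midconvexity $f\bigl(\tfrac{u+v}{2}\bigr)\le\tfrac12 f(u)+\tfrac12 f(v)$, whereas the hypothesis only supplies $f\bigl(\tfrac{u+v}{2}\bigr)\le h\bigl(\tfrac12\bigr)\bigl[f(u)+f(v)\bigr]$ with $h\bigl(\tfrac12\bigr)\ge\tfrac12$ --- a \emph{weaker} inequality precisely because $h(\alpha)\ge\alpha$. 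Concretely, take $h\equiv 1$ (the $P$-function case, which satisfies every hypothesis of the theorem): two applications of $h$-midconvexity give only $f\bigl(\tfrac{3x+y}{4}\bigr)\le f\bigl(\tfrac{x+y}{2}\bigr)+f(x)\le 2f(x)+f(y)$, which already overshoots the target $h\bigl(\tfrac34\bigr)f(x)+h\bigl(\tfrac14\bigr)f(y)=f(x)+f(y)$, and the overshoot compounds at every level of the induction (the coefficient sum grows without bound instead of converging to $h(t)+h(1-t)$). So neither the direct coefficient-matching nor the ``linear bound first'' detour closes the induction, and the density-plus-continuity step never gets off the ground. To make your strategy work you would need an extra hypothesis controlling the accumulation of $h\bigl(\tfrac12\bigr)$ factors, e.g.\ supermultiplicativity-type or $h(\lambda)+h(1-\lambda)\le 1$ conditions, none of which are assumed here.

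For the record, the paper takes an entirely different route: it argues by contradiction, setting $g(t)=f(t)-L(t;h)$ on an interval where $h$-convexity allegedly fails, letting $c$ be the first point where $g$ attains its positive maximum, and contradicting the mid-$h$-convexity of $g$ via $g(c-p)+g(c+p)<2g(c)\le \frac{1}{h(1/2)}\,g(c)$. That argument has its own problems (the step $2\le 1/h(1/2)$ actually requires $h(1/2)\le 1/2$, the opposite of the stated hypothesis, and the claim that $g$ inherits mid-$h$-convexity from $f$ is asserted without justification since $L(\cdot;h)$ is not affine), but it is a genuinely different strategy from yours, and neither your dyadic-induction plan nor the paper's maximum-principle argument goes through as stated under the given hypotheses.
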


\begin{proof}
The first direction follows directly by definition of
$h$-convexity. To prove the second direction, suppose on the
contrary that $f$ is not $h$-convex. Then,  there exists a
subinterval $[x, y]$ such that the graph of $f$ is not
 under the chord joining $(x, f(x))$ and $(y, f(y))$; that is,
 \begin{align*}
f\left(t\right)\ge \left[ {f\left( y \right) - f\left( x \right)}
\right]h\left(\frac{t-x}{y-x}\right) +  f\left( x \right)=L(t;h),
\end{align*}
for all such $x,y \in I \cap J$. In other words, the function
\begin{align*}
g\left( t \right) = f\left( t \right) - \left[ {f\left( y \right)
- f\left( x \right)} \right]h\left( {\frac{{t - x}}{{y - x}}}
\right) -f\left( x \right), \qquad t \in I
\end{align*}
satisfies $M = \sup \left\{ {g\left( t \right):t \in [x,y]}
\right\}
> 0$. Since   $h\left(0\right)=0$ and
$h\left(1\right)=1$, then $L\left(x;h\right)=  f\left( x \right)$
and $L\left(y;h\right)= f\left( y \right)$, so that the $h$-cord
$L$ agrees with $f$ at endpoints $x,y$. Thus, $g$ is continuous
and $g(x) = g(y) =   0$, direct computation shows that $g$ is also
mid  $h$-convex. Setting $c = \inf \left\{ {t \in [x,y]:g\left( t
\right) = M} \right\}$, then necessarily $g(c) = M$ and $c \in
(x,y)$. By the definition of $c$, for every $p > 0$ for which
$c\pm p \in (a, b)$, we have $ g\left( {c - p} \right) < g\left( c
\right)$ and $g\left( {c + p} \right) < g\left( c \right) $, so
that since $h\left(\alpha\right) \ge \alpha$, for all $\alpha \in
(0,1)$ we have
\begin{align*}
 g\left( {c - p} \right) +g\left( {c + p} \right)   <
2 g\left( {c} \right)  =\frac{1}{\frac{1}{2}} g\left( {c} \right)
\le  \frac{1}{h\left( {\frac{1}{2}} \right)} g\left( {c} \right)
\end{align*}
which contradicts the fact that $g$ is mid $h$-convex.
\end{proof}

\begin{corollary}
Let $h: J\to \left(0,\infty\right)$ be a non-negative function
such that $h\left(\alpha\right) \le \alpha$, for all $\alpha \in
(0,1)$. Let $f : I \to \mathbb{R}_+$ be a nonnegative continuous
function. $f$ is $h$-concave if and only if it is $h$-midconcave.
\end{corollary}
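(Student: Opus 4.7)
The plan is to mirror the argument of the preceding theorem, with the inequalities reversed throughout to suit the concave/midconcave setting. The forward direction, that $h$-concavity implies $h$-midconcavity, is immediate by specializing the defining inequality of $h$-concavity at $t = 1/2$. The content lies entirely in the converse.

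For the converse, suppose $f : I \to \mathbb{R}_+$ is continuous and $h$-midconcave, and assume for contradiction that $f$ fails to be $h$-concave. Then on some subinterval $[x,y] \subseteq I$ the graph of $f$ dips strictly below the $h$-cord $L(\cdot;h)$ joining $(x,f(x))$ and $(y,f(y))$; that is, there is $t_0 \in (x,y)$ with $f(t_0) < L(t_0;h)$. Introduce the auxiliary function
\begin{align*}
g(t) := f(t) - L(t;h), \qquad t \in [x,y].
\end{align*}
Because $h(0) = 0$ and $h(1) = 1$, the $h$-cord agrees with $f$ at the endpoints, so $g(x) = g(y) = 0$; $g$ is continuous; and $g(t_0) < 0$. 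Consequently $g$ attains a strictly negative minimum $m$ on the compact interval $[x,y]$.

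Set $c := \inf\{t \in [x,y] : g(t) = m\}$. Then $c \in (x,y)$ and $g(c) = m < 0$, and by the choice of $c$ as the infimum of the minimizing set, for every sufficiently small $p > 0$ with $c \pm p \in (x,y)$ one has $g(c - p) > g(c)$ strictly and $g(c + p) \ge g(c)$, whence $g(c-p) + g(c+p) > 2\,g(c)$. Invoking the $h$-midconcavity of $g$ inherited from $f$ by the same direct computation indicated in the theorem, together with $h(1/2) \le 1/2$ and $g(c) < 0$ (which give $2h(1/2)\,g(c) \ge g(c)$), multiplication of the strict inequality above by the positive number $h(1/2)$ yields
\begin{align*}
h(1/2)\,\bigl[\,g(c-p) + g(c+p)\,\bigr] \;>\; 2\,h(1/2)\,g(c) \;\ge\; g(c),
\end{align*}
in direct contradiction with the $h$-midconcavity of $g$ at $c$, which demands $g(c) \ge h(1/2)[g(c-p) + g(c+p)]$. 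Therefore $f$ must be $h$-concave.

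The one step that genuinely deserves care—glossed over in the theorem as a ``direct computation''—is the inheritance of $h$-midconcavity by $g = f - L$ from $f$. This reduces to a midpoint comparison of the $h$-cord $L$ with itself, and the sign condition $h(\alpha) \le \alpha$ is precisely what makes the resulting inequalities compatible with the midconcave direction. This is the main technical point I would verify explicitly before invoking the contradiction argument above.
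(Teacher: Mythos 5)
Your overall strategy --- mirroring the theorem's proof with all inequalities reversed --- is exactly what the paper intends (the corollary is stated without its own proof), and your localization argument via $c=\inf\{t\in[x,y]:g(t)=m\}$, the strict inequality $g(c-p)+g(c+p)>2g(c)$, and the final multiplication by $h(1/2)>0$ are all handled correctly. The problem is the step you yourself flag but then use anyway: the claim that $g=f-L(\cdot\,;h)$ inherits $h$-midconcavity from $f$. This is a genuine gap, and it does not close under the stated hypotheses. Writing $L(t;h)=A\,h\!\left(\frac{t-x}{y-x}\right)+B$ with $A=f(y)-f(x)$ and $B=f(x)$, and using the $h$-midconcavity of $f$, the midpoint inequality $g\!\left(\frac{u+v}{2}\right)\ge h\!\left(\frac{1}{2}\right)\left[g(u)+g(v)\right]$ reduces to requiring that $L$ itself satisfy
\[
A\,h\!\left(\tfrac{s_u+s_v}{2}\right)+B \;\le\; h\!\left(\tfrac{1}{2}\right)A\left[h(s_u)+h(s_v)\right]+2\,h\!\left(\tfrac{1}{2}\right)B,
\]
where $s_u,s_v$ are the normalized parameters of $u,v$. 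Since $h(1/2)\le 1/2$ and $B=f(x)\ge 0$, the constant terms already go the wrong way ($2h(1/2)B\le B$), and comparing $h\!\left(\frac{s_u+s_v}{2}\right)$ with $h(1/2)\left[h(s_u)+h(s_v)\right]$ needs structural assumptions on $h$ (super/subadditivity or multiplicativity) that are nowhere assumed. In the classical case $h(t)=t$ the inheritance works only because $L$ is affine and the midpoint coefficient is exactly $1/2$, so that $L\!\left(\frac{u+v}{2}\right)=\frac{1}{2}\left[L(u)+L(v)\right]$ and the subtraction cancels identically; neither feature survives for a general $h$. So your contradiction is derived from a premise that is unestablished and in fact false in general. (The paper's own proof of the theorem contains the same unjustified ``direct computation,'' so the defect is inherited, but as written your argument is incomplete at precisely the point you identified.)

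A secondary issue, also inherited from the paper: the negation of $h$-concavity is the existence of $x,y,t$ with $f(tx+(1-t)y)<h(t)f(x)+h(1-t)f(y)$, whereas ``the graph dips below the $h$-cord $L(\cdot\,;h)$'' replaces $h(1-t)$ by $1-h(t)$. These coincide only when $h(1-t)=1-h(t)$, which is not among the hypotheses; under $h(\alpha)\le\alpha$ one only gets a one-sided comparison. If you rework the argument, you should either add this identity (or the inequality $h(t)+h(1-t)\ge 1$, as in the paper's ``Another Proof'' but reversed) as a hypothesis, or phrase the contradiction directly in terms of the defining inequality rather than the cord.
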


Sometimes we often need to know how fast limits are converging,
and this allows us to control the remainder of a given function in
a neighborhood of some point $x_0$. So that, we need to extend the
concept of continuity. Fortunately, in control theory and
numerical analysis, a function $h: J \subseteq [0,\infty) \to
[0,\infty]$ is called a control function if
\begin{enumerate}
\item $h$ is nondecreasing,

\item $\inf_{\delta > 0} h\left(\delta\right) = 0$.
\end{enumerate}

A function $f:I\to \mathbb{R}$ is $h$-continuous at $x_0$ if
$\left| f\left(x\right) - f\left(x_0\right)\right| \le h\left(|x-
x_0|\right)$, for all $x \in I$. Furthermore, a function is
continuous in   $x_0$ if it is $h$-continuous for some control
function $h$.

This approach leads us to refining the notion of continuity by
restricting the set of admissible control functions.

For a given set of control functions $\mathcal{C}$ a function is
$\mathcal{C}$-continuous if it is $h$-continuous for all $h \in
\mathcal{C}$. For example the  H\"{o}lder continuous functions of
order $\alpha \in (0,1]$  are defined by the set of control
functions
\begin{align*}
\mathcal{C}^{(\alpha)}_{H}\left(h\right) =\left\{h |
h\left(\delta\right) = H \left|\delta\right|^{\alpha},  H >
0\right\}
\end{align*}
In case $\alpha=1$, the set $\mathcal{C}^{(1)}_{H}\left(h\right)$
contains all  functions satisfying the Lipschitz condition.

\begin{theorem}\label{thm2}
Let $(0,1)\subseteq J$, $h: J\to \left(0,\infty\right)$ be a
control function which is supermultiplicative such that
$h(\alpha)\ge \alpha$ for each $\alpha\in (0,1)$. Let $I$ be a
real interval, $a,b\in \mathbb{R}$ $(a<b)$ with $a,b$ in $
I^{\circ}$ (the interior of $I$). If $f : I \to \mathbb{R}$ is
non-negative $h$-convex function on $[a,b]$, then $f$ is
$h$-continuous on $[a,b]$.
\end{theorem}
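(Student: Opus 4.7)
The plan is to adapt the classical argument showing that convex functions are locally Lipschitz on the interior of their domain. I would first bound $f$ on $[a,b]$ using $h$-convexity with two reference points slightly outside $[a,b]$, then, for $x,y \in [a,b]$ with $x < y$, express $y$ as an $h$-convex combination of $x$ and a point $b' > b$ in $I^{\circ}$ (and symmetrically express $x$ in terms of $a' < a$ and $y$) to convert the $h$-convexity inequality into a bound on $|f(x)-f(y)|$ in terms of $h(|x-y|)$. The role of supermultiplicativity will be to convert $h((y-x)/(b'-x))$ back into $h(y-x)$ up to a constant.

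A crucial preliminary observation, which I would establish at the very start, is that under the hypotheses $h(\alpha) \leq 1$ for every $\alpha \in (0,1)$. Indeed, supermultiplicativity gives $h(\alpha)^2 \leq h(\alpha^2)$, monotonicity gives $h(\alpha^2) \leq h(\alpha)$, and dividing by the strictly positive $h(\alpha)$ yields $h(\alpha) \leq 1$. Thus $\alpha \leq h(\alpha) \leq 1$ on $(0,1)$. Now choose $a', b' \in I$ with $a' < a < b < b'$, shrinking if necessary so that $b' - a' < 1$ and all differences arising below lie in $(0,1) \subseteq J$, and set $M_0 = \max\{f(a'), f(b')\}$. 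For $x \in [a,b]$, writing $x$ as a convex combination of $a'$ and $b'$ and using $h \leq 1$ in the $h$-convexity inequality immediately gives $f(x) \leq 2 M_0$.

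For the main estimate, take $x,y \in [a,b]$ with $x < y$ and set $\lambda = (y-x)/(b'-x) \in (0,1)$, so that $y = (1-\lambda) x + \lambda b'$. Then $h$-convexity yields $f(y) \leq h(1-\lambda) f(x) + h(\lambda) f(b')$; since $h(1-\lambda) \leq 1$ and $f(x) \geq 0$, the $f(x)$-terms only subtract, leaving $f(y) - f(x) \leq 2 M_0\, h(\lambda)$. Supermultiplicativity applied to the factorization $y - x = \lambda \cdot (b' - x)$ gives $h(\lambda) \leq h(y-x)/h(b'-x) \leq h(y-x)/h(b'-b)$, and the denominator is at least $b'-b > 0$ by the lower bound $h(\alpha) \geq \alpha$. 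A mirror-image argument writing $x = (1-\mu) a' + \mu y$ produces an analogous bound for $f(x) - f(y)$, and combining them yields $|f(x)-f(y)| \leq K\, h(|x-y|)$ with $K$ depending only on $a,b,a',b'$ and $M_0$. Since $Kh$ is itself a control function in the sense of the preceding definition, this is the required $h$-continuity on $[a,b]$.

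The main obstacle I see is the preliminary observation that $h \leq 1$ on $(0,1)$; without it, the term $(h(1-\lambda) - 1) f(x)$ in the difference estimate could be positive and stay bounded away from zero as $\lambda \to 0$, which would prevent $|f(y)-f(x)|$ from going to zero with $|y-x|$ and thus destroy the argument. A secondary but routine concern is ensuring every argument fed to $h$ actually lies in the domain $J$; this is handled cleanly by choosing the auxiliary points $a', b'$ sufficiently close to $a,b$ so that all differences encountered stay inside $(0,1) \subseteq J$.
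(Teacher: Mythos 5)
Your proof is correct and, at the level of architecture, matches the paper's: both express a point of $[a,b]$ as an $h$-convex combination of the other point and an auxiliary point just outside $[a,b]$, bound $f$ on the slightly enlarged interval, and use supermultiplicativity in the form $h(\lambda)\,h(b'-x)\le h\bigl(\lambda(b'-x)\bigr)$ to convert $h(\lambda)$ into $h(|y-x|)$ divided by a fixed constant. The genuinely different --- and better --- ingredient is your preliminary observation that $h\le 1$ on $(0,1)$, extracted from supermultiplicativity together with the monotonicity built into the definition of a control function; this lets you discard the coefficient $h(1-\lambda)$ of $f(x)$ cleanly using $f\ge 0$. The paper instead begins with $f(\lambda z+(1-\lambda)x)\le \lambda f(z)+(1-\lambda)f(x)$, which is ordinary convexity and does not follow from $h$-convexity under the hypothesis $h(\alpha)\ge\alpha$ (the comparison between $h(1-\lambda)f(x)$ and $(1-\lambda)f(x)$ goes the wrong way); the paper's ``Another Proof'' repairs this only by adding the extra hypothesis $h(\alpha)+h(1-\alpha)\le 1$. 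Your bound $h\le 1$ shows no extra hypothesis is needed, so your route is the more robust one. One point to tidy up: the paper's definition of $h$-continuity literally asks for $|f(x)-f(y)|\le h(|x-y|)$ with constant $1$, whereas you obtain $K\,h(|x-y|)$; the paper forces $K=1$ by the rather artificial normalization $h(\epsilon)=M_\epsilon-m_\epsilon$, and you should either state explicitly that you prove $(Kh)$-continuity (which is what continuity via control functions genuinely requires) or absorb the constant similarly. This is presentational, not a gap.
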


\begin{proof}
Choose $\epsilon>0$ be small enough such that
$\left(a-\epsilon,b+\epsilon\right)\subseteq I$ and let
\begin{align*}
m_{\epsilon}: = \inf \left\{ {f\left( x \right),x \in \left( {a -
\epsilon ,b + \epsilon } \right)} \right\} \qquad\text{and}\qquad
M_{\epsilon}: = \sup \left\{ {f\left( x \right),x \in \left( {a
-\epsilon ,b + \epsilon } \right)} \right\},
\end{align*}
such that $h\left(\epsilon\right)=M_{\epsilon}-m_{\epsilon}$. If
$x,y\in \left[a,b\right]$, such that
$x=y+\frac{\epsilon}{\left|y-x\right|}\left(y-x\right)$ and
$\lambda_\epsilon  =
\frac{\left|y-x\right|}{\epsilon+\left|y-x\right|}$. Then  for
$z\in \left[a-\epsilon,b+\epsilon\right]$, $y=\lambda_\epsilon
z+\left(1-\lambda_\epsilon \right)x$, we have
\begin{align*}
 f\left( y \right) = f\left( {\lambda_\epsilon  z + \left( {1 - \lambda_\epsilon  } \right)x} \right) &\le  \lambda_\epsilon  f\left( z \right) +  \left( {1 - \lambda_\epsilon  } \right)f\left( x \right) \\
  &\le  \lambda_\epsilon  \left[ {f\left( z \right) - f\left( x \right)} \right] + f\left( x
  \right)
  \le h\left( \lambda_\epsilon   \right)\left[ {f\left( z \right) - f\left( x \right)} \right] + f\left( x
  \right),
\end{align*}
which implies that $y=\lambda_\epsilon  z+\left(1-\lambda_\epsilon
\right)x$, we have
\begin{align*}
 f\left( y \right) - f\left( x \right)  \le h\left( \lambda_\epsilon   \right)\left[ {f\left( z \right) - f\left( x \right)}
 \right]
  &\le h\left( \lambda_\epsilon   \right)\left( {M_\epsilon   - m_\epsilon  } \right) \\
  &< h\left( {\frac{{\left| {y - x} \right|}}{\epsilon }} \right)\left( {M_\epsilon   - m_\epsilon  } \right) \\
  &< \frac{{h\left( {\left| {y - x} \right|} \right)}}{{h\left( \epsilon  \right)}}\left( {M_\epsilon   - m_\epsilon  } \right) \\
  &= h\left( {\left| {y - x} \right|} \right).
\end{align*}
Since this is true for any $x,y\in \left[a,b\right]$, we conclude
that $\left|f\left( y \right) - f\left( x \right)\right|\le
h\left( {\left| {y - x} \right|} \right)$, which shows that $f$ is
$h$-continuous on $\left[a,b\right]$ as desired.
\end{proof}

\noindent {\textbf {Another Proof}.} Alternatively, if one
replaces the condition $h(\alpha)+h(1-\alpha)\le 1$ for each
$\alpha\in (0,1)$ instead of $ h(\alpha)\ge \alpha$ in Theorem
\ref{thm2}. Then by repeating the same steps in the above proof,
we have
\begin{align*}
 f\left( y \right) = f\left( {\lambda_\epsilon  z + \left( {1 - \lambda_\epsilon  } \right)x} \right) &\le h\left( \lambda_\epsilon   \right)f\left( z \right) + h\left( {1 - \lambda_\epsilon  } \right)f\left( x \right) \\
  &\le h\left( \lambda_\epsilon   \right)f\left( z \right) + \left[ {1 - h\left( \lambda_\epsilon   \right)} \right]f\left( x \right)\qquad  ({\rm{since}}\,\,\,h\left( {1 - \lambda_\epsilon  } \right) \le 1 - h\left( \lambda_\epsilon   \right)) \\
  &= h\left( \lambda_\epsilon   \right)\left[ {f\left( z \right) - f\left( x \right)} \right] + f\left( x
  \right),
\end{align*}
which implies that $y=\lambda_\epsilon  z+\left(1-\lambda_\epsilon
\right)x$, we have
\begin{align*}
 f\left( y \right) - f\left( x \right)  \le h\left( \lambda_\epsilon   \right)\left[ {f\left( z \right) - f\left( x \right)}
 \right]
  &\le h\left( \lambda_\epsilon   \right)\left( {M_\epsilon   - m_\epsilon  } \right) \\
  &< h\left( {\frac{{\left| {y - x} \right|}}{\epsilon }} \right)\left( {M_\epsilon   - m_\epsilon  } \right) \\
  &< \frac{{h\left( {\left| {y - x} \right|} \right)}}{{h\left( \epsilon  \right)}}\left( {M_\epsilon   - m_\epsilon  } \right) \\
  &= h\left( {\left| {y - x} \right|} \right).
\end{align*}
Since this is true for any $x,y\in \left[a,b\right]$, we conclude
that $\left|f\left( y \right) - f\left( x \right)\right|\le
h\left( {\left| {y - x} \right|} \right)$, which shows that $f$ is
$h$-continuous on $\left[a,b\right]$. Surely, this is can be
considered as an alternative  proof  of Theorem \ref{thm2}.

 It's well known that if $f$ is twice differentiable then $f$ is convex if and only if $f''\ge0$. In a convenient  way
Pinheiro in \cite{MRP5} proposed that $f$ is an $s$-convex (in the
second sense) if and only if $f''\ge 1-2^{1-s}$. Indeed, Pinheiro
presented a ``proof" to her result, however we can say without
doubt that she introduced some good thoughts rather than formal
mathematical proof. Following the same way in \cite{MRP5} and in viewing the presented discussion in the introduction we
conjecture that:
\begin{conjecture}
Let $h: J\to \left(0,\infty\right)$ be a non-negative function
such that $h\left(\alpha\right) \ge \alpha$, for all $\alpha \in
(0,1)$, and consider $f:I\to \mathbb{R}$ be a twice differentiable
function. A function $f$ is $h$-convex if and only if
$f^{\prime\prime} \left( x \right) \ge
1-2h\left(\frac{1}{2}\right)$.
\end{conjecture}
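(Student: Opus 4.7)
The plan is to exploit the equivalence between $h$-convexity and $h$-midconvexity (already established in the preceding theorem under the standing hypothesis $h(\alpha)\ge\alpha$), so that the conjecture reduces to an equivalence between
\begin{equation*}
f\!\left(\frac{x+y}{2}\right)\le h\!\left(\frac{1}{2}\right)\bigl[f(x)+f(y)\bigr]\qquad (x,y\in I)
\end{equation*}
and the pointwise condition $f''(x)\ge 1-2h(1/2)$ on $I$. Since $f$ is twice differentiable, this is the natural setting in which to run a Taylor argument modelled on the classical proof of $f''\ge 0\Leftrightarrow f$ convex.

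For the necessity direction, I would fix $c\in I^{\circ}$, set $x=c-p$, $y=c+p$ with $p>0$ small, and expand
\begin{equation*}
f(c\pm p)=f(c)\pm p f'(c)+\tfrac{p^{2}}{2}f''(c)+o(p^{2}).
\end{equation*}
Summing the two and substituting in the $h$-midconvex inequality, the first-derivative terms cancel and one is left with
\begin{equation*}
h(\tfrac{1}{2})\,p^{2}f''(c)+o(p^{2})\ge \bigl(1-2h(\tfrac{1}{2})\bigr)f(c).
\end{equation*}
Following the spirit of Pinheiro's heuristic in \cite{MRP5}, where $f$ is compared against the unit-height limiting curve, I would normalise $f(c)$ to unity at the reference scale $p$ before passing to the limit; this renormalisation is precisely the step meant to deliver the clean lower bound $f''(c)\ge 1-2h(\tfrac{1}{2})$.

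For the converse I would run the argument backwards: given $f''\ge 1-2h(1/2)$ on $I$, integrate twice to control the deviation of $f$ from any secant, compare the result to the $h$-cord $L(t;h)$ built in \eqref{eq2.1}, and thereby verify the midpoint inequality directly. A convenient auxiliary is $g(t):=h(\tfrac{1}{2})[f(x)+f(y)]-f(t)$; establishing $g(\tfrac{x+y}{2})\ge 0$ via a second-order mean-value theorem applied symmetrically about the midpoint, together with the hypothesised lower bound on $f''$, would give $h$-midconvexity, and the equivalence theorem then upgrades this to full $h$-convexity.

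The main obstacle, as the author himself flags, is the scale mismatch that appears when one divides the midpoint Taylor identity by $p^{2}$: the threshold $1-2h(1/2)$ does not emerge from the expansion in a dimensionally consistent fashion unless $h(1/2)=1/2$ (the ordinary convex case). Making the conjecture rigorous therefore seems to require either (i) an additional regularity assumption on $h$ at $1/2$ so that a quadratic correction to the $h$-cord itself contributes the missing constant, or (ii) a precise analytic definition of Pinheiro's \emph{limiting curve} together with a chosen normalisation of $f(c)$. Identifying exactly which structural hypothesis on $h$ makes the constant $1-2h(1/2)$ emerge naturally from a Taylor expansion is, I expect, the crux of the problem.
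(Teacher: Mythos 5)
This statement is presented in the paper as a \emph{conjecture}: the author explicitly offers no proof, remarking only that Pinheiro's analogous claim for $s$-convex functions ($f''\ge 1-2^{1-s}$) came with ``some good thoughts rather than formal mathematical proof.'' So there is no argument in the paper to compare yours against; your proposal has to stand on its own, and it does not.

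You have correctly located the fatal obstruction but not overcome it. In the necessity direction, $h$-midconvexity at $x=c-p$, $y=c+p$ plus the Taylor expansion gives $\bigl(2h(\tfrac12)-1\bigr)f(c)+h(\tfrac12)\,p^{2}f''(c)+o(p^{2})\ge 0$. As $p\to 0$ the second-derivative term vanishes while the zeroth-order term $\bigl(2h(\tfrac12)-1\bigr)f(c)$ survives; since $h(\tfrac12)\ge\tfrac12$ and $f\ge 0$ that term is automatically nonnegative, so in the limit the inequality yields no information about $f''(c)$ at all. The proposed ``normalisation of $f(c)$ to unity at the reference scale $p$'' is not a legitimate step: $f(c)$ is a fixed number, and rescaling $f$ changes the function under study. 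In fact the conjecture cannot be literally true whenever $h(\tfrac12)>\tfrac12$: the defining inequality \eqref{h-convex} is homogeneous, so $\lambda f$ is $h$-convex for every $\lambda>0$ whenever $f$ is, whereas the condition $f''\ge 1-2h(\tfrac12)$ with strictly negative right-hand side is destroyed by taking $\lambda$ large if $f''(x_0)<0$ anywhere (and non-convex $s$-convex functions, for which this happens, are classical). Hence no Taylor-type argument can close the gap without first reformulating the statement; your concluding items (i) and (ii) are the right diagnosis of what is missing, but as written the proposal establishes neither implication.
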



\begin{thebibliography}{5}



\bibitem{mwo1}M.W. Alomari, M. Darus, S.S. Dragomir and U.
Kirmaci, On fractional differentiable $s$-convex functions, {\em
Jordan J. Math and Stat.}, (JJMS), {\bf3} (1) (2010), 33--42.


\bibitem{BV}M. Bombardelli and S. Varo\v{s}anec, Properties of $h$-convex
functions related to the Hermite-Hadamard-Fej\'{e}r inequalities,
{\em Compute. Math. Applica.}, {\bf58} (9) (2009), 1869--1877.

\bibitem{B1}W.W. Breckner, Stetigkeitsaussagen f\"{u}r eine Klasse
verallgemeinerter konvexer funktionen in topologischen linearen
R\"{a}umen, {\em Publ. Inst. Math.}, {\bf23} (1978), 13--20.

\bibitem{B2}W. W.
Breckner and G. Orban, Continuity properties of rationally
s-convex mappings with values in an ordered topological linear
space, Babes-Bolyai University, Cluj-Napocoi (1978).


\bibitem{B3}W.W. Breckner, H\"{o}lder-continuity of certain
generalized convex functions, {\em Optimization}, {\bf28} (1994),
201--209.

\bibitem{B4} W. W. Breckner, Rational $s$-convexity, a generalized
Jensen-convexity. Cluj-Napoca: Cluj University Press, 2011.

\bibitem{C1}S. Cobzas,  and I. Muntean,  Continuous and locally Lipschitz convex functions, {\em  Mathematica Rev. d'Anal. Num\'{e}r. et de Th\'{e}orie de I'Approx.}, Ser. Mathematica, {\bf18} (41) (1976),  41--51.

\bibitem{C2}S. Cobzas, On the Lipschitz properties of continuous convex functions, {\em Mathematlca Ret. d'Anal. Num\'{e}r. et de Th\'{e}orie de I'Approx.}, Ser. Marhemarica, {\bf 21} (44)   1979, 123?125.


\bibitem{C}M.V. Cortez, Relative strongly $h$-convex functions and integral
inequalities, {\em Appl. Math. Inf. Sci. Lett.}, {\bf4} (2)
(2016), 39--45.


\bibitem{D2} S.S. Dragomir, Inequalities of Jensen type for $h$-convex functions on linear
spaces,  {\em Math. Moravica}, {\bf 19} (1) (2015), 107--121.

\bibitem{D3} S.S. Dragomir, Inequalities of Hermite-Hadamard type for $h$-convex functions on
linear spaces, {\em Proyecciones J. Math.}, {\bf34}  (4) (2015),
323--341.

\bibitem{DPP} S.S. Dragomir, J. Pe\v{c}ari\'{c} and L.E. Persson, Some inequalities
of Hadamard type, {\em Soochow J. Math.}, {\bf21} (1995) 335--341.

\bibitem{GL}E.K. Godunova and V.I. Levin, Neravenstva dlja funkcii \v{s}irokogo
klassa, soder\v{z}a\v{s}\v{c}ego vypuklye, monotonnye i nekotorye
drugie vidy funkcii,  Vy\v{c}islitel. Mat. i. Mat. Fiz. Me?vuzov.
Sb. Nau?c. Trudov, MGPI, Moskva, 1985,   138--142.

\bibitem{H}A. H\'{a}zy, Bernstein-doetsch type results for $h$-convex functions,
{\em Math. Inequal. Appl.}, {\bf 14} (3) (2011), 499--508.

\bibitem{HM} H. Hudzik and L. Maligranda, Some remarks on $s$-convex functions,
{\em Aequationes Math.}, {\bf 48} (1994), 100--111.

\bibitem{M}M. Mat\l oka, On Hadamard's inequality for $h$-convex function on
a disk, {\em Appl. Math. Comp.},{\bf235} (2014), 118--123.


\bibitem{J} J. Musielak, Orlicz spaces and Modular spaces, Lecture Notes in Mathematics, Springer-Verlag, Berlin Heidelberg, 1983.

\bibitem{MO} W. Matuszewska and W. Orlicz, A note on the theory of $s$-normed spaces of $\psi$-integrable
functions, {\em Studia Math.}, {\bf 21}, 1981 , 107--115.

\bibitem{MP}D.S. Mitrinovi\'{c} and J. Pe\v{c}ari\'{c}, Note on a class of functions
of Godunova and Levin, {\em C. R. Math. Rep. Acad. Sci. Can.},
{\bf12} (1990), 33--36.

\bibitem{MPF}D.S. Mitrinovi\'{c}, J. Pe\v{c}ari\'{c} and A.M. Fink, Classical and New
Inequalities in Analysis, Kluwer Academic, Dordrecht, 1993.


\bibitem{NP}C.P. Niculescu, L.E. Persson, Convex Functions and Their
Applications. A Contemporary Approach, CMS Books Math., vol. 23,
Springer-Verlag, New York, 2006.






\bibitem{O}A. Olbry\'{s}, Representation theorems for
$h$-convexity, {\em J. Math. Anal. Appl}, {\bf 426} (2)(2015),
986--994.


\bibitem{PR} C.E.M. Pearce and A.M. Rubinov, $P$-functions, quasi-convex
functions and Hadamard-type inequalities, {\em J. Math. Anal.
    Appl.}, {\bf 240} (1999), 92--104.



\bibitem{P}M. Pycia, A direct proof of the $s$-H\"{o}lder continuity of Breckner
$s$-convex functions, {\em  Aequationes Math.}, {\bf 61} (1-2),
(2001), 128--130.

\bibitem{MRP1}M.R. Pinheiro, Convexity Secrets, Trafford Publishing, 2008.

\bibitem{MRP2}M.R. Pinheiro, Exploring the concept of $s$-convexity, {\em Aequationes Mathematicae},  {\bf74} (3) (2007), 201--209.

\bibitem{MRP3}M.R. Pinheiro, Hudzik and Maligranda's $s$-convexity as a local approximation to convex functions, Preprint, 2008.

\bibitem{MRP4}M.R. Pinheiro, Hudzik and Maligranda's $s$-convexity as a local approximation to convex functions II, Preprint, 2008.


\bibitem{MRP5}M.R. Pinheiro, Hudzik and Maligranda's $s$-convexity as a local approximation to convex functions III, Preprint, 2008.

\bibitem{MRP6}M.R. Pinheiro. H--H Inequality for $s$-Convex Functions, {\em Inter. J.  P. Appl.  Math.}, {\bf44} (4) (2008), 563--579.

\bibitem{RV}A. W. Roberts and D. E. Varberg,  Convex Functions, Academic
Press, New York, 1973.

\bibitem{R} S. Rolewicz, Metric Linear Spaces, 2nd ed., PWN, Warsaw, 1984.


\bibitem{TT} T. Trif, H\"{o}lder continuity of generalized convex set-valued mappings, {\em J. Math. Anal. Appl.}, {\bf255} (2001), 44--57.

\bibitem{TYD}K.-L. Tseng, G.-S. Yang and S.S. Dragomir, On quasi convex
functions and Hadamard's inequality, {\em Demonsrtatio
    Mathematics}, {\bf XLI} (2) (2008), 323--335.


\bibitem{V}S. Varo\v{s}anec,  On $h$-convexity, {\em J. Math. Anal. Appl.},  {\bf326}
(2007), 303--311.

\end{thebibliography}
\end{document}